\numberwithin{equation}{section}
\renewcommand{\S}{{\mathbb S}}
\renewcommand{\P}{{\mathbb P}}
\newcommand{\R}{{\mathbb R}}
\newcommand{\C}{{\mathbb C}}
\newcommand{\HH}{{\mathbb H}}
\newcommand{\OO}{{\mathbb O}}
\newcommand{\M}{{\mathcal M}}
\newcommand{\V}{\operatorname{vol}}
\theoremstyle{plain}
\newtheorem{theorem}{Theorem}[section]
\newtheorem{lem}[theorem]{Lemma}
\newtheorem{prop}[theorem]{Proposition}
\begin{document}

\title[Lower bound for the Green energy in harmonic manifolds]{Lower bound for the Green energy of point configurations in harmonic manifolds}

\author[C. Beltr\'an]{Carlos Beltr\'an}
\address{Departamento de Matem\'aticas, Estad\'istica y Computaci\'on, Universidad de Cantabria. 39005. Santander, Spain}
\email{beltranc@unican.es}

\author[V. de la Torre]{V\'ictor de la Torre}
\address{Departament de Matem\`atiques I Inform\`atica, Universitat de Barcelona. 08007. Barcelona, Spain}
\email{delatorre@ub.edu}

\author[F. Lizarte]{F\'atima Lizarte}
\address{Departamento de Matem\'aticas, Estad\'istica y Computaci\'on,
Universidad de Cantabria. 39005. Santander, Spain}
\email{fatima.lizarte@unican.es}

\keywords{Green energy, compact harmonic manifolds}

\subjclass{Primary: 31C12, 31C20, 41A60}

\thanks{First and third authors belong to the Universidad de Cantabria and are supported by Grant PID2020-113887GB-I00 funded by MCIN/ AEI /10.13039/501100011033. The second author belongs to the Universitat de Barcelona and has been partially supported by grant MTM2017-83499-P by the Ministerio de Ciencia, Innovaci\'on y Universidades, Gobierno de Espa\~{n}a and by the Generalitat de Catalunya (project 2017 SGR 358). The third author has also been supported by Grant PRE2018-086103 funded by MCIN/AEI/10.13039/501100011033 and by ESF Investing in your future}

\begin{abstract}
In this paper, we get the sharpest known to date lower bounds for the minimal Green energy of the compact harmonic manifolds of any dimension.    
\end{abstract}

\maketitle

\section{Introduction}

\subsection*{The Green function.}
Let $\M$ be any compact Riemannian manifold. The Green function $G(\M;\cdot,\cdot)$ is the unique function $G:(\M\times\M)\setminus\{(p,p):p\in\M\}\to\R$ with the properties:
\begin{enumerate}
\item In the sense of distributions, $\Delta_q G=\mathcal{S}_p(q)-\text{vol}(\mathcal{M})^{-1}$, where $\mathcal{S}_p$ is Dirac's delta and $\Delta=-\mathrm{div}\nabla$ is the Laplace--Beltrami operator, which is the natural extension of the Laplacian to $\M$ (note the sign convention).
\item Symmetry: $G(\mathcal{M};p,q)=G(\mathcal{M};q,p)$.
\item The mean of $G(\mathcal{M};p,\cdot)$ is zero for all $p\in\M$, i.e., $\int_{q\in\mathcal{M}}G(\mathcal{M};p,q)dq=0$.
\end{enumerate}
\subsection*{The Green energy.}
Let $p_1,\ldots,p_N\in\M$ and consider the Green energy
$$
E_{\mathcal{M}}(p_1, \ldots, p_N)=\sum_{i\neq j}G(\M;p_i,p_j).
$$
The search for minimizers of the Green energy is an interesting and difficult mathematical problem. 
If $\mathcal M=\S^2$ is the usual $2$--sphere, we have
\begin{equation}\label{eq:Greenlog}
G({\S}^2;p,q)=\frac{1}{2\pi}\log\frac{1}{\|p-q\|}-\frac{1}{4\pi}+\frac{\log 2}{2\pi},
\end{equation}
where $\log $ denotes the natural logarithm. Hence, the search for minimizers of the Green energy in $\S^2$ is the question of Smale's 7th problem \cite{Smale2000}. 

In a general compact Riemannian manifold, if $p_1,\ldots,p_N$ are minimizers of the Green energy for increasing values of $N$, then they are asymptotically uniformly distributed, i.e., the associated counting probability measure converges in the weak sense to the uniform probability measure in $\mathcal M$, see \cite{BeltranCorralCriado2019}. More quantitatively, in \cite{Stefan2} it is shown that the Wasserstein $2$--distance between these two measures is of order $N^{-1/\mathrm{dim}(\mathcal M)}$, which is the best possible for dimension greater than or equal to $3$. Here and all along the paper, $\mathrm{dim}(\mathcal M)$ stands for the real dimension of a manifold $\mathcal M$.

\subsection*{Minimal value of the Green energy in spheres.}
Upper and lower bounds for the least possible Green energy have been investigated by several authors. The most studied case is that of $\S^2$. After \cite{Wagner89,RSZ94,Dubickas96,Brauchart08,BS18,Stefan} it is known that
$$
\min_{p_1,\ldots,p_N\in\S^2}\sum_{i\neq j}\log\frac{1}{\|p_i-p_j\|}= \left(\frac12-\log2\right)N^2-\frac12N\log N+C_{\mathrm{log}}N+o(N),
$$
where $C_{\log}$ is a constant whose value is not known. From \cite{BS18} we have
$$
C_{\mathrm{log}}\leq C_{BHS}= 2\log2+\frac12\log\frac23+3\log\frac{\sqrt\pi}{\Gamma(1/3)}=-0.0556\ldots
$$
This upper bound has been conjectured to be an equality using several different approaches \cite{BHS12,BS18,Stefan}; see also \cite{BHS19} for context and history of these results. The best currently known lower bound \cite{Lauritsen21} has the same form but for a slightly different constant $\log2-\frac34=-0.0568\ldots$ instead of $C_{\mathrm{log}}$. These bounds can be translated using \eqref{eq:Greenlog}
in terms of the Green energy:
\begin{multline}\label{eq:lbLaur}
-\frac{1}{8\pi}N+o(N)\leq\min_{p_1,\ldots,p_N\in\S^2}E_{\mathcal{\S}^2}(p_1, \ldots, p_N)+\frac1{4\pi}N\log N\leq\\ \frac{1}{4\pi}\left(2C_{BHS}+1-2\log2\right)N+o(N)=
-\frac{0.9950\ldots}{8\pi}N+o(N).
\end{multline}
It has been proved in \cite{BeltranLizarte22} that, if $\mathcal M=\S^n$ is the $n$--sphere, the argument in \cite[Appendix B]{Lauritsen21} (see \cite{LN75, SM76} for some precedents) can be adapted to get a seemingly sharp lower bound
\begin{equation}\label{eq:lbSn}
\min_{p_1,\ldots,p_N\in\S^n}E_{\S^n}(p_1, \ldots, p_N)\geq-\frac{n^{1+2/n}}{(n^2-4)V_n^{1-2/n}V_{n-1}^{2/n}}N^{2-2/n}+o(N^{2-2/n}),
\end{equation}
where $V_n=2\pi^{(n+1)/2}/\Gamma((n+1)/2)$ is the volume of $\S^n$. Upper bounds of the same order, also with explicit constants, can be obtained from the respective bounds for Riesz energies, see \cite{BMOC} and follow--up papers.
\subsection*{Minimal value of the Green energy in general manifolds.}
In a general compact Riemannian manifold, \cite[p. 4, Corollary]{Stefan2} proved that
$$
\min_{p_1,\ldots,p_N\in\mathcal{M}}E_{\mathcal{M}}(p_1, \ldots, p_N)\geq\begin{cases}\mathrm{Constant}(\mathcal M)N\log N&\mathrm{dim}(\mathcal M)=2,\\
\mathrm{Constant}(\mathcal M)N^{2-2/\mathrm{dim}(\mathcal M)}&\mathrm{dim}(\mathcal M)\geq3.
\end{cases}
$$
It is easy to see that $\mathrm{Constant}(\mathcal M)$ is negative in all cases, but obtaining explicit values for a given $\mathcal M$ seems to be a much more difficult task in general.

\subsection*{Minimal value of the Green energy in harmonic manifolds.}
Recall that the compact harmonic manifolds are the sphere $\S^n$, the real, complex and quaternionic projective spaces $\mathbb{RP}^n,\mathbb{CP}^n,\mathbb{HP}^n$ and the Cayley plane $\mathbb{OP}^2$. These are all $2$--point homogeneous spaces: if $p_1,q_1,p_2,q_2\in\mathcal M$ satisfy $d_R(p_1,q_1)=d_R(p_2,q_2)$, then there exists an isometry of $\mathcal M$ that takes $p_1$ to $p_2$ and $q_1$ to $q_2$. This fact implies that many geometric properties (including minimal energy computations) can be described in a simpler manner than for general manifolds. The case $\mathcal M=\mathbb{RP}^2$ is particularly simple since, as noted in \cite{Pedro}, $E_{\mathbb{RP}^2}(p_1,\ldots,p_N)$ can be written in terms of $E_{\S^2}(p_1,\ldots,p_N,-p_1,\ldots,-p_N)$ and the lower bound on the latter implies a lower bound on the former:
\begin{equation}\label{eq:lbRP2}
\min_{p_1,\ldots,p_N\in\mathbb{RP}^2}E_{\mathbb{RP}^2}(p_1, \ldots, p_N)\geq-\frac{N}{4\pi}\log N+\frac{1}{4\pi}\left(\frac12-\log2\right)N+o(N).
\end{equation}
Moreover, $\mathbb{CP}^1$ is isometric to the Riemann sphere, that is, the sphere of radius $1/2$ centered at $(0,0,1/2)$, and hence $E_{\mathbb{CP}^1}(p_1,\ldots,p_N)=4E_{\mathbb{S}^2}(2\hat p_1,\ldots,2\hat p_N)$ for some $\hat p_i$ given by the aforementioned isometry. This implies from \eqref{eq:lbLaur}:
\begin{equation}\label{eq:lbCP1}
\min_{p_1,\ldots,p_N\in\mathbb{CP}^1}E_{\mathbb{CP}^1}(p_1, \ldots, p_N)\geq-\frac{1}{\pi}N\log N-\frac{1}{2\pi}N+o(N).
\end{equation}
These are the sharpest known lower bounds for the harmonic manifolds of real dimension $2$. The higher--dimensional case has been studied in \cite{BeltranEtayo18} for the complex projective space and in \cite{Matzke22} for general harmonic manifolds. This last paper contains the sharpest upper and lower bounds for the Green energy to the date. The notation in that paper is slightly different from ours, since in it the Riemannian metric is normalized in such a way that each $\mathcal M$ has unit volume. Translating their result to our notation, we summarize the lower bounds of \cite{Matzke22}:
\begin{align*}
E_{\R\P^n}(p_1, \ldots, p_N)\geq& -\displaystyle\frac{n}{4(n-2)V}\left(\frac{\sqrt{\pi}}{\Gamma\left(\frac{n+1}{2}\right)}\right)^{2/n}N^{2-2/n}+o(N^{2-2/n}),\\
E_{\C\P^n}(p_1, \ldots, p_N)\geq& -\displaystyle\frac{n}{4(n-1)n!^{1/n}V}N^{2-1/n}+o(N^{2-1/n}),\\
E_{\HH\P^n}(p_1, \ldots, p_N)\geq& -\displaystyle\frac{n}{2(2n-1)\Gamma(2n+2)^{1/2n}V}N^{2-1/2n}+o(N^{2-1/2n}),\\
E_{\OO\P^2}(p_1, \ldots, p_N)\geq& -\displaystyle\frac{2}{7V}\sqrt[8]{\frac{6}{11!}}N^{\frac{15}{8}}+o(N^{\frac{15}{8}}).
\end{align*}
In each case, $V$ holds for the volume of the corresponding manifold, given in Table \ref{table:dov}.
The main goal of this paper is to show that the argument in \cite{Lauritsen21,BeltranLizarte22} can indeed be extended quite straightforwardly to all the harmonic manifolds of any dimension, sharpening the lower bounds for the minimal Green energy:

\begin{theorem}[Main Theorem]\label{th:mainintro}
The following lower bounds for the Green energy of $N$ points in each compact harmonic manifold $\mathcal M$ with $\mathrm{dim}(\mathcal M)>2$ holds:
\begin{align*}
E_{\R\P^n}(p_1, \ldots, p_N)\geq& -\displaystyle\frac{n}{(n^2-4)V}\left(\frac{\Gamma\left(\frac{n}{2}+1\right)\sqrt{\pi}}{\Gamma\left(\frac{n+1}{2}\right)}\right)^{2/n}N^{2-2/n}+o(N^{2-2/n}),\\
E_{\C\P^n}(p_1, \ldots, p_N)\geq& -\displaystyle\frac{n}{2(n^2-1)V}N^{2-1/n}+o(N^{2-1/n}),\\
E_{\HH\P^n}(p_1, \ldots, p_N)\geq& -\displaystyle\frac{n}{(2n-1)(2n+1)^{1+1/2n}V}N^{2-1/2n}+o(N^{2-1/2n}),\\
E_{\OO\P^2}(p_1, \ldots, p_N)\geq& -\displaystyle\frac{4}{63\sqrt[8]{165}V}N^{\frac{15}{8}}+o(N^{\frac{15}{8}}).
\end{align*}
Our method applies equally to $\S^2$, $\S^n$ with $n\geq3$, $\mathbb{RP}^2$ and $\mathbb{CP}^1$, which yields the same lower bounds as in \eqref{eq:lbLaur}, \eqref{eq:lbSn}, \eqref{eq:lbRP2} and \eqref{eq:lbCP1}, respectively. 
\end{theorem}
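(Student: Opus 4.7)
My plan is to extend to every compact harmonic manifold the positive-definiteness/smoothing scheme of Lauritsen \cite{Lauritsen21}, already adapted to $\S^n$ in \cite{BeltranLizarte22}. The starting observation is that the spectral expansion
\[
G(\M;p,q)=\sum_{k\ge 1}\lambda_k^{-1}\phi_k(p)\phi_k(q),
\]
over an $L^2$-orthonormal basis of Laplace eigenfunctions with positive eigenvalues, together with property~(3) in the introduction, makes $G$ a positive semidefinite kernel: $\iint_{\M\times\M}G(\M;x,y)\,d\nu(x)\,d\nu(y)\ge 0$ for every finite signed measure $\nu$. I apply this to $\nu=\sum_{i=1}^{N}\chi_i^r$, where $\chi_i^r:=v_r^{-1}\mathbf{1}_{B_r(p_i)}$ is the normalized indicator of the geodesic ball of radius $r$ centred at $p_i$. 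Two-point homogeneity forces $v_r$ to depend only on $r$ and the self-interaction $\iint G\,d\chi_i^r\,d\chi_i^r=:I_0(r)$ to be the same for every $i$, so that expanding the square and separating the diagonal yields
\[
\sum_{i\ne j}\iint G\,d\chi_i^r\,d\chi_j^r\ \ge\ -N\,I_0(r).
\]

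The second step is to convert the smeared cross terms into the genuine pairwise values $G(\M;p_i,p_j)$. Away from the diagonal one has $\Delta_q G(\M;p,q)=-V^{-1}$ (a constant), and because $\M$ is harmonic the ball mean of any function with constant Laplacian is given exactly by a Darboux-type formula in terms of the radial Jacobi density $\theta(s)$ of $\M$. Iterating this formula in each variable produces, whenever the balls $B_r(p_i)$ and $B_r(p_j)$ are disjoint,
\[
\iint G\,d\chi_i^r\,d\chi_j^r\ =\ G(\M;p_i,p_j)\ +\ \frac{2}{V}\,\Phi(r),
\]
for an explicit $\Phi(r)$ built out of $\theta$ on $[0,r]$. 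For the close pairs $d_R(p_i,p_j)\le 2r$ the left-hand side is bounded by a universal function of $r$, while $G(\M;p_i,p_j)$ is large positive (diverging in the collision limit), so the same inequality $\iint G\,d\chi_i^r\,d\chi_j^r\le G(\M;p_i,p_j)+\tfrac{2}{V}\Phi(r)$ still goes the right way up to a manifestly lower-order correction. Substituting back into the positivity inequality produces
\[
E_{\M}(p_1,\ldots,p_N)\ \ge\ -N\,I_0(r)\ -\ \frac{2N(N-1)}{V}\,\Phi(r)\ +\ o\bigl(N^{2-2/\mathrm{dim}(\M)}\bigr).
\]

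The proof is then completed by a case-by-case evaluation of $I_0(r)$ and $\Phi(r)$ in each of $\R\P^n$, $\C\P^n$, $\HH\P^n$ and $\OO\P^2$, using the explicit forms of the radial profile $g$ of $G$ and of the Jacobi density $\theta$, extracting the leading behaviour as $r\to 0$, and optimising over $r$; the optimum sits at $r$ of order $N^{-1/\mathrm{dim}(\M)}$ and the four constants in the statement emerge by combining the integrals of $g$ against $\theta$ with the volumes listed in Table~\ref{table:dov}. The same scheme applied to $\S^n$, $\S^2$, $\R\P^2$ and $\C\P^1$ reproduces \eqref{eq:lbSn}, \eqref{eq:lbLaur}, \eqref{eq:lbRP2} and \eqref{eq:lbCP1} unchanged. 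The main obstacle is this final computation: although the overall scheme is uniform, extracting the dominant term of $I_0(r)$ requires expansions of $g$ and $\theta$ at the origin pushed to exactly the right order, and the resulting constants arise from a delicate interplay between the singularity of the Green function and the volume density peculiar to each harmonic manifold; a secondary technical point is the verification that the close-pair correction is genuinely of lower order than $N^{2-2/\mathrm{dim}(\M)}$ for arbitrary configurations, which rests on the integrability of the singularity of $g$ on each $\M$.
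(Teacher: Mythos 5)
Your overall scheme is the same as the paper's: apply the conditional positive definiteness of $G$ (Proposition \ref{positividad_alpha}) to the measure obtained by smearing each point over a ball of radius $r$, convert the smeared cross terms back to $G(\M;p_i,p_j)$ by an exact mean-value computation over balls, and then optimise $r\sim N^{-1/\dim\M}$; your inequality $E_\M\ge -NI_0(r)-\tfrac{2N(N-1)}{V}\Phi(r)$ is, after the identification $\Phi=VK$ and $I_0=\Theta-\tfrac{V-V(a)}{V(a)}K$, exactly Theorem \ref{thm:main}. However, your treatment of the close pairs has a genuine gap. You claim that for $d_R(p_i,p_j)\le 2r$ the inequality $\iint G\,d\chi_i^r\,d\chi_j^r\le G(\M;p_i,p_j)+\tfrac{2}{V}\Phi(r)$ holds ``up to a manifestly lower-order correction'' because $G(\M;p_i,p_j)$ is large positive. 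But both sides are of size $r^{2-d}$ for such pairs, and the sign of their difference is not controlled by your argument; worse, the theorem is stated for arbitrary configurations, and if $\Theta(N)$ points cluster inside a single ball of radius $r\sim N^{-1/d}$ there are $\sim N^2$ close pairs, so a per-pair error of order $r^{2-d}\sim N^{(d-2)/d}$ sums to $N^{3-2/d}$, which dominates the target term $N^{2-2/d}$. ``Integrability of the singularity of $g$'' does not rescue this, since the problem is the count of close pairs, not the size of any single one.

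The missing ingredient is a sign, not a size estimate: the ball average $\tfrac{1}{V(a)}\int_{B(p_0,a)}G(\M;p,q)\,dq$ equals $G(\M;p,p_0)+K(\M,a)$ when $p\notin B(p_0,a)$ and equals that quantity \emph{minus an explicitly nonnegative term} when $p\in B(p_0,a)$ (this is Lemma \ref{lem:integralballprev}, proved by the Darboux/Green's-identity computation you allude to, carried out on both sides of the sphere $S(p_0,a)$). Hence $\widehat U_{ij}\le G(\M;p_i,p_j)+2K(\M,a)$ holds exactly for \emph{every} pair, overlapping balls included, and there is no error term to control at all. With that repaired, what remains is the quantitative half of the proof, which you defer entirely: you must compute $I_0(r)$ and $\Phi(r)$ to leading order as $r\to0$ (Lemma \ref{lem:assy}: $K\sim a^2/(2(d+2)V)$ and $\Theta\sim dB_\M a^{2-d}/(2V)$, using $B_\M$ and $r^{d-1}\Omega(r)$ from Table \ref{table:dov}), taking care that the piece $\tfrac{V}{V(a)}K(\M,a)$ of the self-energy is of the same order $a^{2-d}$ as $\Theta$ and therefore contributes to the leading constants; only after the one-variable optimisation in $C$ (with $a=C^{1/2}N^{-1/d}$) do the four constants of the statement appear.
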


We can compare our bounds with the ones of \cite{Matzke22} mentioned above, and in all the cases our bounds are sharper, see figures \ref{fig:RPncomparison}, \ref{fig:CPncomparison} and \ref{fig:HPncomparison} for the comparison in the real, complex and quaternionic projective cases and observe that
$$
0.0335\ldots=\frac{4}{63\sqrt[8]{165}}<\frac{2}{7}\sqrt[8]{\frac{6}{11!}}=0.0400\ldots
$$
for the Cayley plane.

\begin{figure}
\centering
\includegraphics[width=1\textwidth]{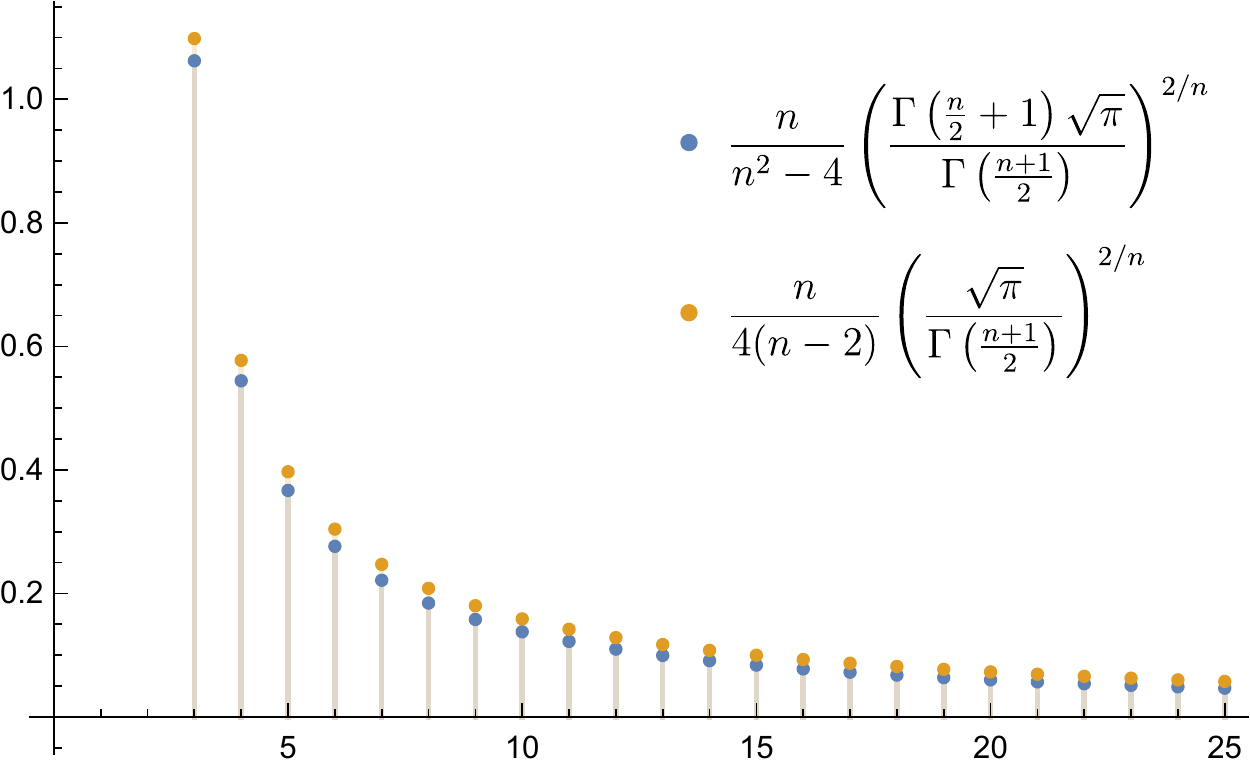}
\caption{The absolute value of the dominant coefficients in the lower bound for $E_{\mathbb{R}\mathbb{P}^{n}}(p_1,\ldots,p_N)$, without the $1/V$ factor and for increasing values of $n$. Blue dots are our constants in Theorem \ref{th:mainintro} and yellow dots are those of \cite{Matzke22}.}
    \label{fig:RPncomparison}
\end{figure}
\begin{figure}
\centering
\includegraphics[width=1\textwidth]{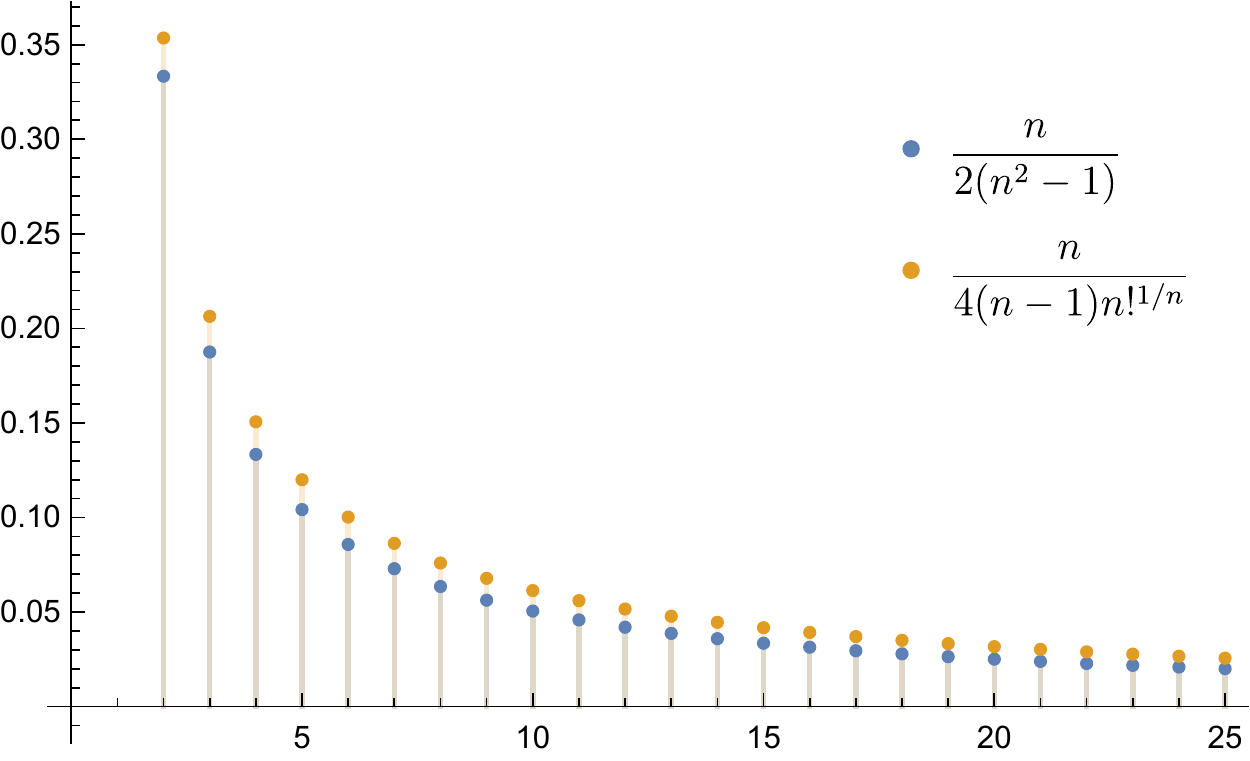}
\caption{The absolute value of the dominant coefficients in the lower bound for $E_{\mathbb{C}\mathbb{P}^{n}}(p_1,\ldots,p_N)$, without the $1/V$ factor and for increasing values of $n$. Blue dots are our constants in Theorem \ref{th:mainintro} and yellow dots are those of \cite{Matzke22}.}
    \label{fig:CPncomparison}
\end{figure}
\begin{figure}
\centering
\includegraphics[width=1\textwidth]{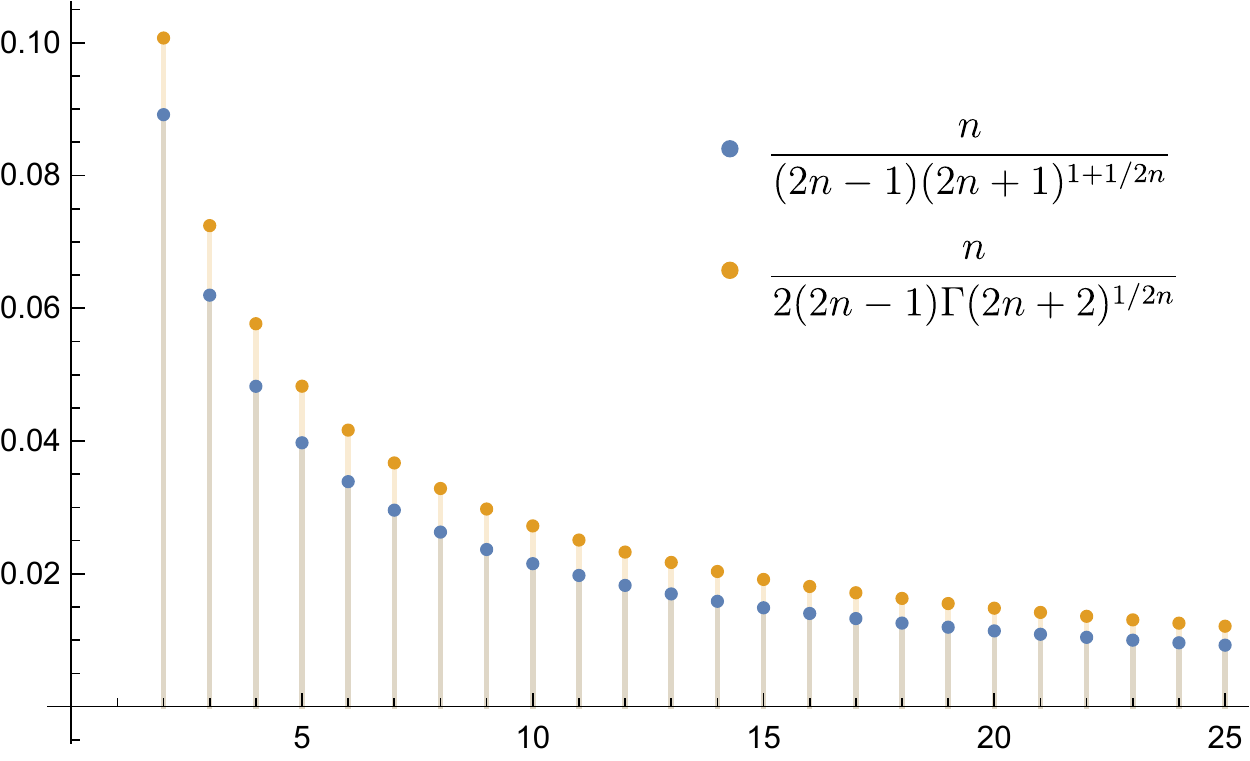}
\caption{The absolute value of the dominant coefficients in the lower bound for $E_{\mathbb{H}\mathbb{P}^{n}}(p_1,\ldots,p_N)$, without the $1/V$ factor and for increasing values of $n$. Blue dots are our constants in Theorem \ref{th:mainintro} and yellow dots are those of \cite{Matzke22}.}
    \label{fig:HPncomparison}
\end{figure}

\section{Harmonic manifolds}
\subsection{Basic definitions and notation}Harmonic manifolds are the most symmetric manifolds that one can conceive. There are just five examples of compact harmonic manifolds (up to dimension choices): $\mathbb S^n, \mathbb R\mathbb P^n, \mathbb C\mathbb P^n, \mathbb H\mathbb P^n$ and $\mathbb O\mathbb P^2$. That is, the $n$--dimensional sphere, the real, complex and quaternionic projective spaces of any dimension, and the octonionic projective space of (octonionic) dimension $2$, that is, real dimension $16$, usually called the Cayley plane.
We will use the following notation:
\begin{itemize}
    \item $d=d_{\mathcal M}=\mathrm{dim}_\R(\mathcal M)$ is the real dimension of the compact harmonic manifold $\mathcal M$.
    \item $D=D_{\mathcal M}$ is the diameter of $\mathcal M$, that is, the maximum Riemannian distance between two points in $\mathcal M$.
    \item $B(p,a)=B_{\mathcal M}(p,a)=\{q\in\mathcal M:d_R(p,q)<a\}$ is the ball centered at $p$ of radius $a$. Here, $d_R$ is the Riemannian distance.
    \item $V(a)=V_{\mathcal M}(a)$ is the volume of the ball $B_{\mathcal M}(p,a)$. Note that due to the symmetry of the harmonic manifolds, this quantity does not depend on $p\in{\mathcal M}$.
    \item $V={V_\mathcal M}=V_{\mathcal M}(D)$ is the volume of $\mathcal M$.
     \item $S(p,a)=S_{\mathcal M}(p,a)=\{q\in\mathcal M:d_R(p,q)=a\}$ is the sphere centered at $p$ of radius $a$.
    \item $v(a)=v_{\mathcal M}(a)$ is the $(d-1)$--dimensional volume of the sphere $S_{\mathcal M}(p,a)$, with the inherited Riemannian structure. Again, this value is independent of $p\in\mathcal M$. 
    \item The exponential map $\mathrm{exp}_{p_0}=\mathrm{exp}_{\mathcal M,p_0}$ is
$$
\begin{matrix}
  \mathrm{exp}_{p_0}: & \{v\in T_{p_0}\mathcal M:\|v\|< D\} & \to &\mathcal M \\
  &v & \to &\mathrm{exp}_{p_0}(v).
\end{matrix}
$$
Here, $p_0$ is any point in $\mathcal M$ 
and $\mathrm{exp}_{p_0}(v)$ is equal to $\gamma_{p_0,v}(t=1)$, with $\gamma_{p_0,v}$ the geodesic passing by $p_0$ with tangent vector $v$ at $t=0$.

    \item $\Omega(r)=\Omega_{\mathcal M}(r)$ is equal to the Jacobian $\text{Jac}(\exp_{p_0})(\exp_{p_0}^{-1}(q))$ for some $p_0,q$ such that $d_R(p_0,q)=r$. This is usually called the volume density function. Since $\mathcal M$ is $2$--point homogeneous, it is independent of the concrete choice of $p_0$ and $q$.
    \item $\mathcal B_{\mathcal M}/V_{\mathcal M}$ is the constant in the first asymptotic term of the Green function for $d\geq3$, that is
    \begin{equation}\label{eq:asymptGreen}
    G(\mathcal M;p,q)=\frac{B_{\mathcal M}}{V_{\mathcal M}\,d_R(p,q)^{d-2}}+O\left(\frac{1}{d_R(p,q)^{d-3}}\right).
    \end{equation}
    In the sphere case it can be obtained from \cite{BeltranLizarte22} by combining Proposition 3.1 and Lemma C.2, while for the projective cases it corresponds to \cite[eq. (2.9)]{Matzke22}.
    \item Finally, we consider two functions that will be useful in our analysis:
    \begin{align}
K(\mathcal M,a)=&\,\,\frac{1}{V\cdot V(a)}\int_0^a v(r)\int_0^r\frac{V(u)}{v(u)}dudr,\label{eq:defK}
\\
\Theta(\mathcal M,a)=&\,\,\frac{1}{ V(a)}\int_{q\in B(p_0,a)}G(\mathcal{M};p_0,q)dq.\label{eq:defTheta}
\end{align}
The first of these two terms appears in the closed formula for the expected value of the Green function in a ball given in Lemma \ref{lem:integralballprev}.
\end{itemize}
Except for the last item, these are all standard definitions in Riemannian geometry. We present the value of these constants and functions for the different choices of $\mathcal M$ in Table 
\ref{table:dov}. 

\subsection{Computing the Green function in harmonic manifolds}\label{section:green}
From the change of variables theorem, for any integrable function $F:\mathcal M\to\R$ such that $F(p)=f(d_R(p,p_0))$ depends only on $d_R(p,p_0)$ we have
\begin{align}
\int_{p\in \mathcal M}F(p)\,dp=&\int_{v\in T_{p_0}\mathcal M:\|v\|< D}F(\exp_{p_0}(v))\Omega(\|v\|)\,dv\nonumber\\
=&\int_0^D\Omega(r)\int_{v\in T_{p_0}\mathcal M:\|v\|= r}f(\|v\|)\,dv\,dr\nonumber
\\
=& \,\V(\mathbb S^{d-1})\int_0^Dr^{d-1}\Omega(r)f(r)\,dr.\label{eq:changeofv}
\end{align}
In particular,
\begin{align}
V(a)=& \,\V(\mathbb{S}^{d-1})\int_0^ar^{d-1}\Omega(r)\,dr.\label{eq:Volbola}
\end{align}
Following \cite{BeltranCorralCriado2019} we have $G(\M;p,q)=\phi(d_R(p,q))$, where
$$ \phi'(r)=-\frac{V^{-1}\int_r^Dt^{d-1}\Omega(t)\,dt}{r^{d-1}\Omega(r)},
$$
which can be computed with Table \ref{table:dov} at hand. We can then integrate $\phi'$ to get the Green function.
The integration constant must be chosen to grant that the integral in $\mathcal{M}$ of $G(\M;p,\cdot)$ is zero for all $p\in\mathcal{M}$. In other words,
\begin{equation}\label{eq:phiCM}
\phi(r)=V^{-1}\left(\hat\phi(r)+C_\M\right),\quad \hat\phi(r)= \int_r^D\frac{\int_s^Dt^{d-1}\Omega(t)\,dt}{s^{d-1}\Omega(s)}\,ds,
\end{equation}
where $C_\M$ is a constant whose value is given in the following result.
\begin{lem}\label{lem:theconstant}
The value of the constant $C_\M$ in \eqref{eq:phiCM} is:
$$
C_\M=-\frac{\V(\S^{d-1})}{V}\int_0^D \hat \phi(r)r^{d-1}\Omega(r)\,dr.
$$
\end{lem}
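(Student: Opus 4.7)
The plan is to exploit the defining condition that $G(\M;p,\cdot)$ has zero integral mean over $\M$, and turn this into a scalar equation for $C_\M$ via the radial reduction in \eqref{eq:changeofv}.

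First I would apply property (3) of the Green function at any base point $p_0$: $\int_{q\in\M}G(\M;p_0,q)\,dq=0$. Since $G(\M;p_0,q)=\phi(d_R(p_0,q))$ depends only on the Riemannian distance, the change of variables formula \eqref{eq:changeofv} (with $F(q)=G(\M;p_0,q)$ and $f=\phi$) converts the integral into the one--dimensional expression
\begin{equation*}
0=\V(\S^{d-1})\int_0^D r^{d-1}\Omega(r)\phi(r)\,dr.
\end{equation*}

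Next I would substitute the decomposition $\phi(r)=V^{-1}(\hat\phi(r)+C_\M)$ from \eqref{eq:phiCM} and split the integral:
\begin{equation*}
0=\frac{\V(\S^{d-1})}{V}\int_0^D r^{d-1}\Omega(r)\hat\phi(r)\,dr+\frac{C_\M\,\V(\S^{d-1})}{V}\int_0^D r^{d-1}\Omega(r)\,dr.
\end{equation*}
The second integral is handled by \eqref{eq:Volbola} evaluated at $a=D$, which gives $\V(\S^{d-1})\int_0^D r^{d-1}\Omega(r)\,dr=V(D)=V$. Hence the coefficient of $C_\M$ reduces to $1$, and solving for $C_\M$ yields the claimed formula.

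There is no real obstacle: the only thing to be careful about is the order of the constants, namely that the zero--mean normalization applied to the ansatz $V^{-1}(\hat\phi+C_\M)$ collapses cleanly because the total volume factor in \eqref{eq:Volbola} cancels the prefactor $\V(\S^{d-1})/V$. This structural cancellation is what produces an explicit closed form for $C_\M$ in terms only of $\hat\phi$, $\Omega$, and the volume constants already tabulated in Table~\ref{table:dov}.
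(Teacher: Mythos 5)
Your proposal is correct and follows exactly the paper's own argument: apply the zero--mean property of $G(\M;p,\cdot)$, reduce to a radial integral via \eqref{eq:changeofv}, substitute $\phi=V^{-1}(\hat\phi+C_\M)$, and use $\V(\S^{d-1})\int_0^D r^{d-1}\Omega(r)\,dr=V$ to isolate $C_\M$. No differences worth noting.
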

\begin{proof}
  The integral of $G(\M;p,\cdot)$ equals
 \begin{equation*}
 0=\int_{q\in\M}G(\M;p,q)\,dq\stackrel{\eqref{eq:changeofv}}{=}\V(\S^{d-1})\int_0^D \phi(r)r^{d-1}\Omega(r)\,dr.
 \end{equation*} 
  Since we have $\phi=V^{-1}(\hat\phi+C_{\mathcal M})$ and $\int_0^D r^{d-1}\Omega(r)\,dr\stackrel{\eqref{eq:changeofv}}{=}\frac{V}{\V(\S^{d-1})}$, we get the result. %
\end{proof}
 A different approach is described in \cite{Matzke22} where  explicit and closed formulas are given for all the cases. We will only need the main term asymptotics $G(\M;p,q)=B_{\mathcal M}/(V_{\mathcal{M}}d_R(p,q)^{d-2})+\text{l.o.t.},$ with $B_{\mathcal M}$ the constant in Table \ref{table:dov}.
Other useful asymptotics are:
\begin{lem}\label{lem:assy}
For $a<<1$, we have:
\begin{align*}
V(a) =&\,\,\frac{\V(\S^{d-1})a^{d}}{d}+o(a^d),\\
v(a)=&\,\,\V(\mathbb{S}^{d-1})a^{d-1}+o(a^{d-1}),\\
K(\M,a)=&\,\, \frac{a^2}{2(d+2)V}+o(a^2),\\
\Theta(\M,a)=&\,\, \frac{dB_\mathcal M}{2V}a^{2-d}+o\left(a^{2-d}\right).
\end{align*}
The last of these equalities needs $d>2$, but the rest of them hold in all cases.
\end{lem}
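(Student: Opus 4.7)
The starting point is the standard Riemannian-geometry fact that the volume density function satisfies $\Omega(0)=1$, since the differential of the exponential map at the origin is the identity; equivalently $\Omega(r)=1+o(1)$ as $r\to 0$. From \eqref{eq:Volbola} one then reads off directly $V(a)=\V(\S^{d-1})a^{d}/d+o(a^{d})$, and differentiating \eqref{eq:Volbola} with respect to $a$ gives $v(a)=\V(\S^{d-1})a^{d-1}\Omega(a)=\V(\S^{d-1})a^{d-1}+o(a^{d-1})$, which settles the first two asymptotics at once.

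For $K(\M,a)$ the plan is to substitute these two expansions into the definition \eqref{eq:defK}. Dividing gives $V(u)/v(u)=u/d+o(u)$, so the inner integral is $\int_0^r V(u)/v(u)\,du=r^{2}/(2d)+o(r^{2})$. Multiplying by $v(r)$ and integrating once more yields $\int_0^a v(r)\int_0^r V(u)/v(u)\,du\,dr=\V(\S^{d-1})a^{d+2}/(2d(d+2))+o(a^{d+2})$, and dividing by $V\cdot V(a)=V\cdot\V(\S^{d-1})a^{d}/d\,(1+o(1))$ produces the advertised $a^{2}/(2(d+2)V)+o(a^{2})$.

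For $\Theta(\M,a)$ with $d>2$, the plan is to combine the Green-function asymptotic \eqref{eq:asymptGreen}, written as $G(\M;p_0,q)=B_\M/(V\,d_R(p_0,q)^{d-2})+O(d_R(p_0,q)^{3-d})$, with the radial change of variables \eqref{eq:changeofv}. The numerator becomes
\[
\V(\S^{d-1})\int_0^a r^{d-1}\Omega(r)\phi(r)\,dr=\frac{\V(\S^{d-1})B_\M}{V}\int_0^a r\,(1+o(1))\,dr+O\!\left(\int_0^a r^{2}\,dr\right),
\]
which equals $\V(\S^{d-1})B_\M a^{2}/(2V)+o(a^{2})$. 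Dividing by $V(a)=\V(\S^{d-1})a^{d}/d\,(1+o(1))$ gives $dB_\M a^{2-d}/(2V)+o(a^{2-d})$, since the remainder integrates to $O(a^{3})$ and after division becomes $O(a^{3-d})=o(a^{2-d})$.

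The computations are routine once the expansion $\Omega(r)=1+o(1)$ is in hand; the only delicate step is the bookkeeping of the error term in $\Theta(\M,a)$, and this is exactly where the assumption $d>2$ intervenes, to ensure that the $O(r^{3-d})$ tail of $G$ really produces a subdominant contribution after renormalization by $V(a)$.
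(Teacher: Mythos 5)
Your proof is correct and follows essentially the same route as the paper: expand $\Omega(r)=1+o(1)$ near $r=0$, feed this into \eqref{eq:changeofv}--\eqref{eq:Volbola} for the first two asymptotics, and substitute into \eqref{eq:defK} and (via \eqref{eq:asymptGreen}) into \eqref{eq:defTheta} for the last two. The only cosmetic difference is that you invoke the general fact $\Omega(0)=1$ where the paper reads the expansion off the explicit formulas in Table \ref{table:dov}, and you track the $O(r^{3-d})$ error in $\Theta$ slightly more explicitly than the paper's ``l.o.t.''\ bookkeeping.
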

\begin{proof}
All these asymptotic expansions follow from \eqref{eq:changeofv}. The first one is immediate from \eqref{eq:Volbola} and Table \ref{table:dov}. The second one follows from
$$
v(a)=\frac{dV(a)}{da}=\V(\mathbb{S}^{d-1})a^{d-1}\Omega(a)=\V(\mathbb{S}^{d-1})a^{d-1}+o(a^{d-1}).
$$
This yields the third formula of the lemma:
\begin{align*}
    V\cdot K(\M,a)=&\frac{1}{a^d}\int_0^a r^{d-1}\int_0^r u \,du\,dr +\text{l.o.t}=\frac{a^2}{2(d+2)}+o(a^2).
\end{align*}
For the last asymptotic we reason in the same way:
\begin{align}
\Theta(\mathcal M,a)=\,\,\,&\frac{1}{V(a)}\int_{q\in B(p_0,a)}G(\mathcal{M};p_0,q)dq\nonumber\\
\stackrel{\eqref{eq:changeofv}}=&\frac{\V(\mathbb S^{d-1})}{V(a)}\int_0^ar^{d-1}\Omega(r)\phi(r)\,dr\\
\stackrel{\eqref{eq:asymptGreen}}=&\frac{\V(\mathbb S^{d-1})}{V\cdot V(a)}\int_0^ar^{d-1}\frac{B_{\mathcal M}}{r^{d-2}}\,dr+\text{l.o.t},\nonumber
\end{align}
and the last claim follows.
\end{proof}

%

\section{The main technical result}

 We will generalize to harmonic manifolds an argument sketched in \cite{LN75, SM76} and described in detail in \cite[Appendix B]{Lauritsen21} for a bounded region in the plane.\begin{theorem}\label{thm:main}
Let $\mathcal M$ be a harmonic manifold and $a>0$. For any collection of $N$ points $p_1,\ldots,p_N\in\mathcal M$ we have
$$
E_{\mathcal{M}}(p_1, \ldots, p_N)\geq N\left(1-2N+ \frac{V}{V(a)}\right) K(\mathcal M,a)-N\Theta(\mathcal M,a),
$$
where, recall, $V$ is the volume of $\M$, $V(a)$ the volume of the ball of radius $a$ and the terms $K(\mathcal M,a)$ and $\Theta(\mathcal M,a)$ have been defined in \eqref{eq:defK} and \eqref{eq:defTheta}, respectively. 
\end{theorem}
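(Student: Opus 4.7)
The plan is to adapt Lauritsen's smearing argument to the geometry of harmonic manifolds by replacing each point $p_i$ with the normalized characteristic function $\nu_i=\mathbf 1_{B(p_i,a)}/V(a)$ and exploiting the two-point homogeneity of $\mathcal M$. Set $\nu=\sum_{i=1}^N\nu_i$. The zero-mean property $\int_{\mathcal M}G(\mathcal M;p,q)\,dq=0$, combined with the fact that $\iint G\,d\sigma\,d\sigma\geq 0$ for every signed measure $\sigma$ with $\sigma(\mathcal M)=0$ (applied to $\sigma=\nu-(N/V)\,dV$, whose cross terms with $dV$ vanish by the mean-zero property), yields
$$0\leq\iint G\,d\nu\,d\nu = N\,S(a)+\sum_{i\neq j}\iint G\,d\nu_i\,d\nu_j,\qquad S(a):=\iint G\,d\nu_i\,d\nu_i,$$
where $S(a)$ is independent of $i$ by two-point homogeneity. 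The program reduces to (i) evaluating $S(a)$ and (ii) comparing the cross terms to $G(\mathcal M;p_i,p_j)$.

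Both tasks hinge on a single analytic identity. For any radial function $f$ about $p_0$ satisfying $\Delta f=c$ (constant) on $B(p_0,a)$, the radial ODE integrates to $f(r)=f(0)-c\int_0^r V(u)/v(u)\,du$, and a further integration against $v(r)$ on $(0,a)$ produces
$$\frac{1}{V(a)}\int_{B(p_0,a)}f\,dq=f(p_0)-c\,V\,K(\mathcal M,a).$$
Averaging over the stabilizer of $p_0$ (which acts transitively on each sphere centered at $p_0$ by two-point homogeneity) extends this to any $f$ with $\Delta f$ constant on $B(p_0,a)$. Applied to $u(y):=\int G(\mathcal M;z,y)\,d\nu_i(z)$, which is radial about $p_i$, satisfies $\Delta u=1/V(a)-1/V$ on $B(p_i,a)$, and equals $\Theta(\mathcal M,a)$ at $p_i$, this yields $S(a)=\Theta(\mathcal M,a)-(V/V(a)-1)\,K(\mathcal M,a)$.

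The crux is the pointwise bound
$$A_i(x):=G(\mathcal M;p_i,x)-\frac{1}{V(a)}\int_{B(p_i,a)}G(\mathcal M;z,x)\,dz\;\geq\;-K(\mathcal M,a),$$
valid for every $x\neq p_i$; applying it first at $x=p_j$ and then (with the roles of $i,j$ swapped) integrating against $d\nu_i$ gives the cross-term comparison $G(\mathcal M;p_i,p_j)\geq\iint G\,d\nu_i\,d\nu_j-2K(\mathcal M,a)$. By two-point homogeneity $A_i$ is radial about $p_i$; the mean-value identity of the previous paragraph forces $A_i\equiv-K(\mathcal M,a)$ outside $B(p_i,a)$, while inside $B(p_i,a)$ it satisfies $\Delta A_i=-1/V(a)$ with the near-diagonal singularity $A_i\sim B_{\mathcal M}/(V_{\mathcal M}\,d_R(p_i,x)^{d-2})$. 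Using the explicit local-flatness value $B_{\mathcal M}=V_{\mathcal M}/((d-2)\V(\S^{d-1}))$ to fix the otherwise-free integration constant, the radial ODE for $A_i$ collapses to
$$A_i'(r)=\frac{V(r)/V(a)-1}{v(r)}\leq 0\qquad(0<r<a),$$
so $A_i$ is non-increasing on $(0,a)$ and therefore $A_i(r)\geq A_i(a)=-K(\mathcal M,a)$ throughout. The main obstacle is precisely this step: matching the two integration constants so that global monotonicity on $(0,a)$ follows from the near-diagonal asymptotics of $G$; the two-dimensional case needs the logarithmic analog.

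Combining the three ingredients,
$$E_{\mathcal M}(p_1,\ldots,p_N)\geq\sum_{i\neq j}\iint G\,d\nu_i\,d\nu_j-2K(\mathcal M,a)N(N-1)\geq -N\,S(a)-2K(\mathcal M,a)N(N-1),$$
and substituting the formula for $S(a)$ rearranges exactly to the claimed $N(1-2N+V/V(a))\,K(\mathcal M,a)-N\,\Theta(\mathcal M,a)$.
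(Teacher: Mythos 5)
Your argument is correct and its overall architecture coincides with the paper's: you use the same decomposition into (i) the conditionally positive-definite piece $\iint G\,d\nu\,d\nu\geq 0$ for $\nu-\frac{N}{V}\,dV$ (the paper's $\alpha$), (ii) the exact self-energy $S(a)=\Theta(\M,a)-(V/V(a)-1)K(\M,a)$ (the paper's $\gamma=-N\,\widehat U_{ii}$), and (iii) the cross-term comparison $G(\M;p_i,p_j)\geq\iint G\,d\nu_i\,d\nu_j-2K(\M,a)$ (the paper's $\delta$), recombined identically. Where you genuinely diverge is in how the ball-average facts are established: the paper isolates them as Lemma \ref{lem:integralballprev} (quoted from \cite{JuanCriadoRey}, with an appendix proof that differentiates the \emph{spherical} mean $F(a)$ and applies Green's second identity), whereas you work directly with the deficiency $A_i(x)=G(\M;p_i,x)-\int G(\M;z,x)\,d\nu_i(z)$, using a mean-value identity for stabilizer-averaged functions with constant Laplacian to get $A_i\equiv -K(\M,a)$ off the ball and the value of $S(a)$, and then the radial ODE $v(r)A_i'(r)=V(r)/V(a)+c_0$ inside the ball, fixing $c_0=-1$ from the near-diagonal normalization $B_{\M}=V/\bigl((d-2)\V(\S^{d-1})\bigr)$ to conclude monotonicity and hence $A_i\geq -K(\M,a)$. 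This is a legitimate self-contained alternative to the appendix lemma, and your $A_i'(r)=(V(r)/V(a)-1)/v(r)$ is exactly the derivative of the correction term $\frac{1}{V(a)}\int_t^a v(r)\int_t^r v(u)^{-1}\,du\,dr$ appearing in Lemma \ref{lem:integralballprev}, so the two computations agree. One small improvement: since $\int G(\M;z,\cdot)\,d\nu_i(z)$ is $C^1$, the constant $c_0=-1$ can be read off from the matching $A_i'(a^-)=A_i'(a^+)=0$ without invoking the asymptotics of $G$ at all, which disposes of the logarithmic case $d=2$ that you flag as needing separate treatment.
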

\begin{proof}
Consider the following terms:
\begin{align*}
U_{BB} &=\frac{N^2}{V^2}\int_{p,q\in\mathcal{M}}G(\mathcal{M};p,q)dpdq=0,\\
U_{ij} & = G(\mathcal{M};p_i,p_j),\\
\widehat{U}_i & = -\frac{2N}{V\cdot V(a)}\int_{B(p_i,a)}\int_{\mathcal{M}} G(\mathcal{M};p,q)dpdq=0,\\
\widehat{U}_{ij} & = \frac{1}{V(a)^2} \int_{B(p_i,a)}\int_{B(p_j,a)} G(\mathcal{M};p,q)dpdq.
\end{align*}
Define $\alpha
,\gamma$ and $\delta$ by
\begin{equation*}
E_{\M}(p_1, \ldots, p_N)
=\underbrace{U_{BB}+\displaystyle\sum_{i=1}^N \widehat{U}_i+\displaystyle\sum_{i,j}\widehat{U}_{ij}}_{(\alpha)}
\underbrace{-\displaystyle\sum_{i=1}^N \widehat{U}_{ii}}_{(\gamma)}+\underbrace{\displaystyle\sum_{i\neq j}(U_{ij} -\widehat{U}_{ij})}_{(\delta)}.
\end{equation*}
Now, note that $\alpha \geqslant 0$ from Proposition \ref{positividad_alpha}, just taking
$$
\nu(p)=\frac{N}{V}-\displaystyle\sum_{i=1}^N \frac{1}{V(a)}\chi_{B(p_i,a)}(p),
$$
where $\chi_A$ is the characteristic function of the set $A$, and check that
$$
\alpha=\int_{p,q\in \mathcal{M}}G(\mathcal{M};p,q)d\nu(p)d\nu(q).
$$
We now need to find lower bounds for $\gamma$ and $\delta$. From Lemma \ref{lem:integralballprev}, we immediately have
\begin{align*}
\delta=&\sum_{i\neq j}\left(G(\mathcal{M};p_i,p_j)-\frac{1}{V(a)^2} \int_{B(p_i,a)}\int_{B(p_j,a)} G(\mathcal{M};p,q)\,dq\,dp\right)\\
\geq &\sum_{i\neq j}\left(G(\mathcal{M};p_i,p_j)-\frac{1}{V(a)} \int_{B(p_i,a)}\left(G(\mathcal{M};p,p_j)+K(\mathcal M,a) \right)dp\right)\\
\geq &\sum_{i\neq j}\left(G(\mathcal{M};p_i,p_j)-\left(G(\mathcal{M};p_i,p_j)+2K(\mathcal M,a) \right)\right)\\
=&-2N(N-1)K(\mathcal M,a),
\end{align*}
(and moreover, although we do not use it in the proof, if $B(p_i,a)\cap B(p_j,a)=\emptyset$ then the inequalities above are equalities, so for most choices of $p_i,p_j$ the inequalities above are quite sharp).

On the other hand, an elementary symmetry argument shows that
$$
\gamma=-\frac{N}{V(a)^2} \int_{p,q\in B(p_0,a)}G(\mathcal{M};p,q)dpdq,
$$
where $p_0$ is any point in $\mathcal M$. We will give a simpler formula for $\gamma$ using the fact that the integral in $\mathcal M$ of $G(\mathcal{M};p,\cdot)$ is zero:
\begin{align*}
\gamma
=&-\frac{N}{V(a)^2} \int_{p\in B(p_0,a)}\left[\int_{q\in\M}G(\mathcal{M};p,q)dq-\int_{q\notin B(p_0,a)}G(\mathcal{M};p,q)dq\right]dp\\
=&\,\,\frac{N}{V(a)^2}\int_{p\in B(p_0,a)}\int_{q\not\in B(p_0,a)}G(\mathcal{M};p,q)dqdp\\
=&\,\,\frac{N}{V(a)^2}\int_{q\not\in B(p_0,a)}\int_{p\in B(p_0,a)}G(\mathcal{M};p,q)dpdq.
\end{align*}
From Lemma \ref{lem:integralballprev}, we conclude
\begin{align*}
\gamma=&\,\,\frac{N}{V(a)}\int_{q\not\in B(p_0,a)}\left(G(\mathcal{M};p_0,q)+K(\mathcal M,a)\right)dq\\
=&\,\,\frac{N(V-V(a))}{V(a)}K(\mathcal M,a)-\frac{N}{V(a)}\int_{q\in B(p_0,a)}G(\mathcal{M};p_0,q)dq\\
=&\,\,\frac{N(V-V(a))}{V(a)}K(\mathcal M,a)-N\Theta(\mathcal{M},a).
\end{align*}
The theorem follows.
\end{proof}
\section{Proof of Theorem \ref{th:mainintro}}
Combining Lemma \ref{lem:assy} with Theorem \ref{thm:main} we have
$$
E_{\mathcal{M}}(p_1, \ldots, p_N)\geq N\left(1-2N+ \frac{dV}{\V(\S^{d-1})a^d}\right) \frac{a^2}{2(d+2)V}-N\frac{dB_\mathcal M}{2V}a^{2-d}+\text{l.o.t}.
$$
Choosing $a$ of the form $C^{1/2}N^{-1/d}$ with $C$ a constant we conclude (up to l.o.t.):
\begin{align*}
E_{\mathcal{M}}(p_1, \ldots, p_N)\geq &
-\frac{N^{2-2/d}}{V}\left(\frac{C}{d+2}+\frac{dC^{1-d/2}}{2}\left(B_{\mathcal M}-\frac{V}{(d+2)\V(\S^{d-1})}\right)\right).
\end{align*}
This last formula is maximized choosing
$$
C=\left[\frac{d(d-2)(d+2)}{4}\left(B_{\mathcal M}-\frac{V}{(d+2)\V(\S^{d-1})}\right)\right]^{\frac{2}{d}},
$$
implying, for that concrete value of $C$:
\begin{align*}
E_{\mathcal{M}}(p_1, \ldots, p_N)\geq &
-\frac{dCN^{2-2/d}}{(d^2-4)V}+\text{l.o.t},
\end{align*}
which yields the claimed lower bounds, using the values of Table \ref{table:dov} for each case.

\appendix
\section{Some properties of the Green function}
We recall some properties of $G(\mathcal M;p,q)$ which hold in any compact manifold $\mathcal M$. Green's function is in some sense the inverse of the Laplace--Beltrami operator:
\begin{prop}
If $f:\mathcal{M} \to \mathbb{R}$ is a continuous function with $\int f =0$, then
$$
u(p)=\int_{q\in\mathcal{M}} G(\mathcal{M};p,q)f(q)dq,
$$
is of class $C^2$ in $\M$ and satisfies $\Delta u=f$.
\end{prop}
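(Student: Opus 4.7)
The plan is to establish the identity $\Delta u = f$ in the sense of distributions first, and then upgrade to $C^2$ regularity via standard elliptic theory combined with the explicit local singularity of $G$.

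For the distributional identity, I would pair $u$ with an arbitrary smooth test function $\psi:\M\to\R$. By Fubini (justified since the singularity of $G$ on the diagonal is of order $d_R(p,q)^{2-d}$ for $d\geq3$ and logarithmic for $d=2$, both absolutely integrable on $\M$),
$$
\int_{\M} u(p)\,\Delta\psi(p)\,dp=\int_{\M} f(q)\left(\int_{\M} G(\M;p,q)\,\Delta\psi(p)\,dp\right)dq.
$$
Using the symmetry $G(\M;p,q)=G(\M;q,p)$ and the defining distributional identity $\Delta_p G(\M;p,q)=\mathcal S_q(p)-V^{-1}$, the inner integral equals $\psi(q)-V^{-1}\int_{\M}\psi$. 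Substituting back,
$$
\int_{\M} u\,\Delta\psi=\int_{\M} f\psi-V^{-1}\!\int_{\M} f\cdot\!\int_{\M}\psi=\int_{\M} f\psi,
$$
where the last step uses the hypothesis $\int f=0$. Hence $\Delta u=f$ as distributions; this is precisely the role played by the zero-mean condition, which cancels the constant $-V^{-1}$ appearing in the definition of $G$.

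To conclude regularity, I would appeal to elliptic theory for the Laplace--Beltrami operator on the compact manifold $\M$. Since $f$ is continuous and $\Delta u=f$ holds distributionally, Calder\'on--Zygmund estimates together with Sobolev embedding give $u\in W^{2,p}(\M)$ for all $p<\infty$, hence $u\in C^{1,\alpha}(\M)$ for every $\alpha\in(0,1)$. To refine this to $C^2$, one splits $G(\M;p,q)=\frac{B_\M}{V\,d_R(p,q)^{d-2}}+R(p,q)$ (using \eqref{eq:asymptGreen}, with an analogous logarithmic split in dimension $2$), where $R$ is less singular. The contribution of $R$ to $u$ is $C^2$ by differentiation under the integral sign, while the leading Newtonian potential is handled by the classical estimates for second derivatives of volume potentials.

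The main obstacle is this last regularity step: strictly speaking, pointwise $C^2$ regularity of a Newtonian potential requires a mild modulus (such as Dini continuity) on $f$, so either the statement is to be read as $C^2$ in the $W^{2,p}$ sense or an extra ingredient is needed. The distributional identity $\Delta u=f$, which is the conceptual heart of the statement, is however a one-line consequence of the three defining properties of the Green function together with $\int f=0$.
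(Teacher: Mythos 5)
The paper does not actually prove this proposition: its ``proof'' is a citation to \cite[Remark 2.3]{BeltranCorralCriado2019}, so there is no internal argument to compare yours against. Your distributional computation is correct and is the standard route: Fubini is justified because the diagonal singularity of $G$ is $O(d_R(p,q)^{2-d})$ (logarithmic for $d=2$), hence integrable; the symmetry of $G$ together with the defining identity $\Delta_q G(\mathcal M;p,\cdot)=\mathcal S_p-V^{-1}$ gives $\int_{\M}G(\M;p,q)\Delta\psi(p)\,dp=\psi(q)-V^{-1}\int\psi$; and the hypothesis $\int f=0$ is exactly what kills the constant term. This cleanly establishes $\Delta u=f$ in the sense of distributions, and Calder\'on--Zygmund plus Sobolev embedding then give $u\in W^{2,p}$ for all $p<\infty$ and $u\in C^{1,\alpha}$.

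The gap you flag at the end is genuine and you are right not to paper over it: for a merely continuous density $f$ the Newtonian potential need not be twice continuously differentiable (the classical counterexamples in $\mathbb R^d$ transfer to the manifold setting via normal coordinates), and the standard pointwise $C^2$ estimates for volume potentials require a Dini-type modulus of continuity on $f$. So your argument proves $\Delta u=f$ distributionally and $u\in W^{2,p}\cap C^{1,\alpha}$, but not literal $C^2$ regularity; that last claim either needs an extra hypothesis on $f$ or must be read in the Sobolev sense, and presumably the cited remark in \cite{BeltranCorralCriado2019} does one of these. For the purposes of this paper the point is immaterial, since the main argument only uses the conditional positive definiteness of $G$ (Proposition \ref{positividad_alpha}) and Lemma \ref{lem:integralballprev}, not the $C^2$ statement; but as a free-standing proof of the proposition as literally worded, yours is complete except for that final regularity upgrade.
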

\begin{proof}
See \cite[Remark 2.3]{BeltranCorralCriado2019}.
\end{proof}

The following result says that the Green function is a conditionally positive definite kernel.
\begin{prop}\label{positividad_alpha}
Let $\nu$ be any finite signed measure in $\mathcal M$ with $\nu(M)=0$. Then,
\begin{equation*}
\int_{p,q\in\mathcal{M}} G(\mathcal{M};p,q)d\nu(p)d\nu(q)\geqslant 0,
\end{equation*}
with equality if and only if $\nu=0$.
\end{prop}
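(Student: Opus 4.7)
The plan is to use the spectral decomposition of the Laplace--Beltrami operator on the closed manifold $\mathcal M$, which admits a complete orthonormal basis of smooth eigenfunctions $\{\phi_k\}_{k\geq 0}$ of $L^2(\mathcal M)$ with $\Delta\phi_k=\lambda_k\phi_k$, $0=\lambda_0<\lambda_1\leq\lambda_2\leq\cdots\to\infty$, and $\phi_0\equiv V^{-1/2}$. By integrating the heat kernel $h_t(p,q)=\sum_{k\geq 0} e^{-\lambda_k t}\phi_k(p)\phi_k(q)$ over $t\in(0,\infty)$ after subtracting the constant mode, one obtains the spectral representation
\begin{equation*}
G(\mathcal M;p,q)=\sum_{k\geq 1}\frac{\phi_k(p)\phi_k(q)}{\lambda_k},
\end{equation*}
which converges in the sense of distributions and smoothly off the diagonal. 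The three defining properties of $G$ listed in the introduction are immediate from this formula.

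For a finite signed measure $\nu$ on $\mathcal M$ with $\nu(\mathcal M)=0$, I would introduce the Fourier coefficients $c_k=\int_{\mathcal M}\phi_k\,d\nu$, noting that $c_0=V^{-1/2}\nu(\mathcal M)=0$. The Parseval--type computation then yields formally
\begin{equation*}
\int_{p,q\in\mathcal M}G(\mathcal M;p,q)\,d\nu(p)\,d\nu(q)=\sum_{k\geq 1}\frac{c_k^2}{\lambda_k}\geq 0,
\end{equation*}
which gives the asserted nonnegativity. If equality holds, then $c_k=0$ for every $k\geq 0$, so $\nu$ annihilates every finite linear combination of eigenfunctions; by density of such combinations in $C(\mathcal M)$, this forces $\nu=0$ as a Radon measure.

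To justify rigorously the above manipulation when $\nu$ is not given by an $L^2$ density, I would regularize via the heat semigroup. For each $t>0$ set $\nu_t(p)=\int h_t(p,q)\,d\nu(q)$; this is a smooth function with $\int\nu_t\,dp=0$. Applying the preceding proposition of this appendix to $f=\nu_t$ yields that $u_t(p):=\int_{\mathcal M}G(\mathcal M;p,q)\nu_t(q)\,dq$ is of class $C^2$ and satisfies $\Delta u_t=\nu_t$. Integration by parts on the closed manifold then gives
\begin{equation*}
\int_{p,q\in\mathcal M}G(\mathcal M;p,q)\,\nu_t(p)\nu_t(q)\,dp\,dq=\int_{\mathcal M}u_t\,\Delta u_t\,dp=\int_{\mathcal M}|\nabla u_t|^2\,dp\geq 0.
\end{equation*}
The main technical obstacle is then to show that the left-hand side tends to $\int\int G\,d\nu\,d\nu$ as $t\to 0^+$; this follows from the local integrability of the Green function (polynomial $d_R^{2-d}$ singularity for $d\geq 3$, logarithmic for $d=2$) combined with the total-variation boundedness of $\nu$, via Fubini and dominated convergence. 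The equality case is handled analogously: if the limit vanishes then $\int|\nabla u_t|^2\to 0$, forcing $\nu_t\to 0$ in a suitable weak sense, which together with the reasoning of the previous paragraph yields $\nu=0$.
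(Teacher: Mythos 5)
Your argument is correct in outline, but it is worth noting that the paper does not actually prove this proposition: it simply cites \cite{BeltranCorralCriado2019} (Def.~3.2 and Prop.~3.14 there), where essentially the spectral/potential-theoretic argument you sketch is carried out. So your proposal is a genuine self-contained proof along the standard lines, and the heat-semigroup regularization plus the identity $\int u_t\,\Delta u_t=\int|\nabla u_t|^2$ (consistent with the paper's sign convention $\Delta=-\mathrm{div}\,\nabla$) is exactly the right mechanism for the nonnegativity.

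Two points deserve more care. First, for a completely general finite signed measure the energy $\int\!\!\int G\,d\nu\,d\nu$ need not be absolutely convergent (take $\nu=\delta_p-\delta_q$: the diagonal singularity of $G$ makes the integral $+\infty$), so the statement should implicitly be read for measures of finite energy, or the value $+\infty$ should be allowed; in the paper's application $\nu$ has a bounded density, so this is harmless. Second, your passage to the limit $t\to0^+$ via ``Fubini and dominated convergence'' is the weak link: $G$ is unbounded near the diagonal and there is no obvious integrable dominating function when $\nu$ is merely of bounded variation. The clean way to close this is to work on the spectral side: writing $c_k=\int\phi_k\,d\nu$, one has
\begin{equation*}
\int\!\!\int G\,\nu_t(p)\nu_t(q)\,dp\,dq=\sum_{k\geq1}\frac{e^{-2\lambda_k t}c_k^2}{\lambda_k},
\end{equation*}
and the summands are nonnegative and increase monotonically as $t\downarrow0$, so the limit exists in $[0,+\infty]$ and equals $\sum_{k\geq1}c_k^2/\lambda_k$ by monotone convergence; identifying this limit with the energy of $\nu$ when the latter is finite is then a separate (standard) verification. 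With that substitution your equality case also simplifies: vanishing of the sum forces $c_k=0$ for all $k$, and density of finite linear combinations of eigenfunctions in $C(\mathcal M)$ gives $\nu=0$, exactly as you say.
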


\begin{proof}
See \cite[p. 166, Def. 3.2]{BeltranCorralCriado2019} and \cite[p. 175, Prop. 3.14]{BeltranCorralCriado2019}.
\end{proof}

We also have the following result \cite[p. 108, Lemma 5.3.1]{JuanCriadoRey} that gives a closed formula for the expected value of the Green function when one of its entries lives in a ball.
\begin{lem}\label{lem:integralballprev}
Let $\mathcal M=\mathbb S^n, \mathbb R\mathbb P^n, \mathbb C\mathbb P^n, \mathbb H\mathbb P^n$ or $\mathbb O\mathbb P^2$.
Then, for any $p_0,p\in\mathcal{M}$,
\begin{itemize}
\item If $d_R(p_0,p)\geq a$, then
\begin{align*}
\frac{1}{V(a)}\int_{q\in B(p_0,a)}G(\mathcal{M}; p, q)\,dq=& \,G(\mathcal{M}; p, p_0)+K(\mathcal M,a).
\end{align*}
\item If $d_R(p_0,p)< a$, then
\begin{align*}
\frac{1}{V(a)}\int_{q\in B(p_0,a)}G(\mathcal{M}; p, q)\,dq=& \,G(\mathcal{M}; p, p_0)+K(\mathcal M,a)\\
&-\frac{1}{V(a)}\int_{d(p_0,p)}^av(r)\int_{d(p_0,p)}^r \frac{du}{v(u)}dr.
\end{align*}
\end{itemize}
In particular, for any $p_0,p\in\mathcal{M}$,
$$
\frac{1}{V(a)}\int_{q\in B(p_0,a)}G(\mathcal{M}; p, q)\,dq\leq G(\mathcal{M}; p, p_0)
+ K(\mathcal M,a).
$$
\end{lem}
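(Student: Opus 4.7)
The plan is to reduce the ball integral to a one-dimensional problem via spherical means and then exploit the PDE satisfied by the Green function together with the divergence theorem. For fixed $p,p_0\in\mathcal M$ with $p\neq p_0$, introduce the spherical average
$$
F(r)=\frac{1}{v(r)}\int_{S(p_0,r)}G(\mathcal M;p,q)\,d\sigma(q),\qquad 0<r\leq a.
$$
Parametrising in geodesic normal coordinates at $p_0$ via $q=\exp_{p_0}(r\xi)$ with $\|\xi\|=1$, and using that in a harmonic manifold the volume density factors as $r^{d-1}\Omega(r)$ with $\Omega$ depending only on $r$ (so $v(r)=\V(\S^{d-1})r^{d-1}\Omega(r)$), a direct differentiation in $r$ yields the two identities
$$
\int_{S(p_0,r)}\partial_r G(\mathcal M;p,q)\,d\sigma(q)=v(r)F'(r),\qquad \int_{B(p_0,a)}G(\mathcal M;p,q)\,dq=\int_0^a v(r)F(r)\,dr.
$$

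Next, I would apply the divergence theorem (with the sign convention $\Delta=-\mathrm{div}\nabla$) together with the distributional equation $\Delta_q G(\mathcal M;p,q)=\mathcal S_p(q)-1/V$ to obtain
$$
v(r)F'(r)=-\int_{B(p_0,r)}\Delta_q G(\mathcal M;p,q)\,dq=\frac{V(r)}{V}-\mathbf{1}_{\{r>d_R(p_0,p)\}}.
$$
Writing $s=d_R(p_0,p)$ and integrating from $0$ to $r$, with the initial condition $F(0)=G(\mathcal M;p,p_0)$ (justified by continuity of $G(\mathcal M;p,\cdot)$ at $p_0\neq p$), this gives
$$
F(r)=G(\mathcal M;p,p_0)+\frac{1}{V}\int_0^r\frac{V(u)}{v(u)}\,du-\mathbf{1}_{\{r>s\}}\int_s^r\frac{du}{v(u)}.
$$

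To finish, I would divide the identity $\int_{B(p_0,a)}G(\mathcal M;p,q)\,dq=\int_0^a v(r)F(r)\,dr$ by $V(a)=\int_0^a v(r)\,dr$ and read off the three contributions of $F$: the first term yields $G(\mathcal M;p,p_0)$, the second reproduces $K(\mathcal M,a)$ by the definition \eqref{eq:defK}, and the third vanishes when $s\geq a$ and otherwise contributes the correction $-\frac{1}{V(a)}\int_s^a v(r)\int_s^r\frac{du}{v(u)}\,dr$, recovering exactly the two cases of the lemma. The delicate point is applying the divergence theorem when $p\in B(p_0,r)$: since $G(\mathcal M;p,\cdot)$ is only locally integrable near $p$, one excises a small ball $B(p,\varepsilon)$ and applies the classical divergence theorem on the annular region $B(p_0,r)\setminus B(p,\varepsilon)$; letting $\varepsilon\to 0$, the flux contribution through $\partial B(p,\varepsilon)$ tends to $1$, matching the Dirac mass and producing precisely the indicator $\mathbf{1}_{\{r>s\}}$ in the formula above.
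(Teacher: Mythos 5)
Your proof is correct and follows essentially the same route as the paper: reduce to the spherical mean $F(r)$, compute $F'$ via the divergence theorem and the distributional equation for $G$, integrate, and then average $v(r)F(r)$ over $[0,a]$. The only (immaterial) difference is that the paper handles the case $p\in B(p_0,r)$ by applying Green's identity on the complement $\mathcal M\setminus B(p_0,r)$, whereas you excise a small ball around $p$ and pass to the limit; both yield the same indicator term.
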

\begin{proof}
We sketch a proof for completeness. For the first identity, multiplying by $V(a)$ and computing the derivative with respect to $a$, it suffices to check that
\begin{equation}\label{eq:Greenidentity}
    \frac{1}{v(a)}\int_{q\in S(p_0,a)}G(\mathcal M;p,q)\,dq=G(\mathcal M;p,p_0)+\frac{1}{V}\int_0^a\frac{V(u)}{v(u)}\,du,\quad a<d_R(p,p_0).
\end{equation}
It is clear that both sides of \eqref{eq:Greenidentity} are equal as $a\to0$. We check that their derivatives also coincide. Call $F(a)$ the left--hand term in \eqref{eq:Greenidentity}. Writing it down in normal coordinates with basepoint $p_0$, we find that the derivative of the left--hand side equals
$$
F'(a)=\frac{1}{v(a)}
\int_{q\in S(p_0,a)}\nabla_{N(q)} G(\mathcal M;p,q)\,dq,
$$
where $N(q)$ is the unit vector orthogonal to $S(p_0,a)$ at $q$ and $\nabla$ is the covariant derivative. From Green's second identity, we get
$$
F'(a)=-\frac{1}{v(a)}\int_{B(p_0,a)}\Delta G(\mathcal M;p,q)\,dq=\frac{V(a)}{Vv(a)}.
$$
Hence, the derivatives at both sides of \eqref{eq:Greenidentity} are equal, proving  \eqref{eq:Greenidentity} and the first claim of the lemma in the case that $d_R(p_0,p)<a$. The case $d_R(p_0,p)=a$ follows from the continuity of both sides of the equality. Finally, if $d_R(p_0,p)=t<a$ we can still compute the derivative using Green's second identity, now to the other open set delimited by $S(p_0,a)$ and using $-N(q)$:
$$
F'(a)=\frac{1}{v(a)}\int_{\mathcal M\setminus B(p_0,a)}\Delta G(\mathcal M;p,q)\,dq=-\frac{1}{Vv(a)}(V-V(a)),\quad a>t.
$$
All in one, we have proved
\begin{align*}
    F(a)=&\,F(t)+\frac{1}{V}\int_t^a\frac{V(u)-1}{v(u)}\,du\\
    =&\,F(0)+\frac{1}{V}\int_0^t\frac{V(u)}{v(u)}\,du +
    \frac{1}{V}\int_t^a\frac{V(u)-V}{v(u)}\,du\\
    =&\,G(\mathcal M;p,p_0)+
    \frac{1}{V}\int_0^a\frac{V(u)}{v(u)}\,du -
    \int_t^a\frac{1}{v(u)}\,du.
\end{align*}
The second claim in the lemma now follows, since 
$$
\int_{q\in B(p_0,a)}G(\mathcal{M}; p, q)\,dq=\int_{0}^av(r)F(r)\,dr.
$$
\end{proof}

\section{Closed formulas for $K(\mathcal M,a)$ and $\Theta(\mathcal M,a)$}
Although we have not used them in our analysis or proofs above, in the cases $\mathcal M=\mathbb{CP}^n,\mathbb{HP}^n,\mathbb{OP}^2$ it is possible to produce exact formulas for these two functions. We summarize them in the following result.
\begin{prop}
Denoting $S=\sin a$, we have:
$$
K(\C\mathbb P^n,a)=\frac{1}{4nV S^{2n}}\left((1-S^{2n})\log (1-S^2)+ \sum_{k=1}^n\frac{S^{2k}}{k}\right),
$$
$$
\Theta(\C\P^n,a)=\frac{1}{2nV}\left(-H_{n-1}-\log S+\frac{n}{2}\sum_{k=1}^{n-1}\frac{1}{k(n-k)S^{2k}}\right),
$$
\begin{align*}
K(\HH\P^n,a)=& \frac{1}{4(2n+1)(2n(1-S^2)+1)V}\\
&\times\left[\frac{1}{S^{4n}}\left(\displaystyle\sum_{k=1}^{2n+1}\frac{S^{2k}}{k}+\log(1-S^2)\right)-(2n(1-S^2)+1)\log(1-S^2)\right],
\end{align*}
\begin{align*}
\Theta(\HH\P^n,a)
=& \frac{1}{V}\left(\frac{n}{2(2n(1-S^2)+1)}\displaystyle\sum_{k=1}^{2n-1}\frac{1}{k(k+1)(2n-k)S^{2k}}\right.\\
&\left.-\frac{H_{2n-1}}{2(2n+1)}-\frac{\log S}{2(2n+1)}-\frac{1+2(n-1)S^2}{4(2n+1)(2n(1-S^2)+1)}\right),
\end{align*}
\begin{align*}
K(\mathbb{O}\mathbb{P}^{2},a)&=
\frac{1}{1219680VS^{16}(-120S^6+396S^4-440S^2+165)}\times\\
&\big[S^2 (815640 S^{20}-1826748 S^{18}+1019480 S^{16}+3465 S^{14}+3960 S^{12}\\
&+4620 S^{10}+5544 S^8+6930 S^6+9240 S^4+13860 S^2+27720)\\
&+27720(120 S^{22}-396 S^{20}+440 S^{18}-165 S^{16}+1)\log(1-S^2)\big],
\end{align*}
 \begin{align*}
\Theta(\mathbb{O}\mathbb{P}^{2},a)
=\frac{1}{V}&\Bigg[\frac{1}{9240 S^{14} \left(-120 S^6+396 S^4-440 S^2+165\right)}\Big(
101420 S^{20}\\
& \,\,-353334 S^{18}+427500 S^{16}
-190150 S^{14}+9900 S^{12}+2310 S^{10}\\ 
&\hspace{1.5cm}+924 S^8+495 S^6+330 S^4+275 S^2+330\Big)
-\frac{1}{22}\ln S\Bigg].
\end{align*}
\end{prop}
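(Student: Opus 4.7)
The plan is to substitute the explicit expressions for $V(a)$, $v(a)$, and $\Omega(r)$ from Table~\ref{table:dov}, together with the closed form of the Green function $G(\mathcal{M};p,q)=\phi(d_R(p,q))$ obtained from \eqref{eq:phiCM} (or equivalently the formulas of \cite{Matzke22}), into the definitions \eqref{eq:defK} and \eqref{eq:defTheta}, and evaluate the resulting iterated one-dimensional integrals in closed form. In each of the three cases the volume density has the shape $\Omega(r)=C\sin^{d-1}(r)\cos^{\mu}(r)/r^{d-1}$ for a small integer $\mu$ (equal to $1,3,7$ for $\C\P^n,\HH\P^n,\OO\P^2$ respectively), so $V(a)$ and $v(a)$ are polynomials in $\sin a$ and $\cos a$, and the substitution $t=\sin^2 r$ will turn the integrands into rational functions of $t$ plus logarithmic pieces.

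For $K(\mathcal{M},a)$ I would first compute the inner integral $\int_0^r V(u)/v(u)\,du$: after the substitution $t=\sin^2 u$, the ratio $V(u)/v(u)$ becomes a rational function of $t$ that integrates to a polynomial in $\sin^2 r$ plus a constant multiple of $\log(1-\sin^2 r)=2\log\cos r$. Multiplying by $v(r)$ and integrating once more, via the same substitution and a single integration by parts on the logarithmic piece (using the standard identity $\int u^{n-1}\log(1-u)\,du=\tfrac{1}{n}((u^n-1)\log(1-u)-\sum_{k=1}^{n}u^k/k)$), produces exactly the types of sums $\sum_k S^{2k}/k$ and $(1-S^{2n})\log(1-S^2)$ appearing in the advertised closed formulas.

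For $\Theta(\mathcal{M},a)$ the change of variables \eqref{eq:changeofv} immediately reduces the quantity to the one-dimensional integral $\frac{\V(\S^{d-1})}{V(a)}\int_0^a r^{d-1}\Omega(r)\phi(r)\,dr$. Substituting the explicit form of $\phi$ gives integrals of the types $\int \sin^{2k-1}(r)\cos^{\mu+1}(r)/\sin^{2j}(r)\,dr$ and $\int \sin^{2k-1}(r)\cos^{\mu+1}(r)\log(\sin r)\,dr$, which again become elementary after $t=\sin^2 r$, yielding rational functions of $\sin^2 a$ together with a single $\log\sin a$ term and a harmonic number $H_{m}$ arising from the constant of integration. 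The additive normalisation is then fixed either directly from Lemma~\ref{lem:theconstant}, or more economically by matching the small-$a$ asymptotic of the result against $\frac{dB_{\mathcal{M}}}{2V}a^{2-d}$ of Lemma~\ref{lem:assy}.

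The main obstacle is not conceptual but purely combinatorial: for $\OO\P^2$ the real dimension is $d=16$ and the polynomials in $\sin a$ appearing in $V(a)$, $v(a)$, and $\phi(a)$ have degree up to $15$, so the intermediate rational functions of $t=\sin^2 a$ grow into the large-integer-coefficient expressions displayed in the statement, and their assembly is best verified with a computer algebra system. A natural global sanity check once the formulas are in hand is that their $a\to 0$ expansions must reproduce $K(\mathcal{M},a)\sim a^2/(2(d+2)V)$ and $\Theta(\mathcal{M},a)\sim dB_{\mathcal{M}}a^{2-d}/(2V)$ from Lemma~\ref{lem:assy}, which doubles as a check on the constants $B_{\mathcal{M}}$ recorded in Table~\ref{table:dov}.
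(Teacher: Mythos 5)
Your proposal matches the paper's own proof, which simply states that these formulas are obtained by direct computation of the integrals in \eqref{eq:defK} and \eqref{eq:defTheta} from the explicit data of Table \ref{table:dov}, with correctness checkable by differentiation; your substitution $t=\sin^2 r$ and the standard antiderivative of $u^{n-1}\log(1-u)$ are exactly the right way to carry out that computation. One small caveat: matching the leading $a^{2-d}$ asymptotic of $\Theta$ cannot fix the additive constant $C_{\mathcal M}$ in $\phi$ (its contribution is $O(1)$, hence invisible at leading order for $d>2$), so the normalization must come from the zero-mean condition of Lemma \ref{lem:theconstant} rather than from your ``more economical'' alternative.
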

\begin{proof}
These are all obtained directly from the definitions \eqref{eq:defK} and \eqref{eq:defTheta}, carefully computing all the indefinite integrals and using the explicit formulas given in Table \ref{table:dov}. Once computed, their correctness can be checked by automatic differentiation. 
\end{proof}

\begin{landscape}
\begin{table}[]
\begin{center}
\begin{tabular}{|c||c|c|c|c|c|}
\hline
                                & $\mathbb{S}^n$                                                    & $\mathbb{R}\mathbb{P}^n$                                         & $\mathbb{C}\mathbb{P}^{n}$                                   & $\mathbb{H}\mathbb{P}^{n}$                                     & $\mathbb{O}\mathbb{P}^{2}$                                              \rule{0pt}{0.4cm}          \\ [0.3ex] \hline\hline
$d$                                             & $n$                                                                 & $n$                                                                & $2n$                                                           & $4n$                                                             & $16$                                                                                 \rule{0pt}{0.3cm} \\ [0.3ex] \hline
$D$                                             & $\pi$                                                             & $\pi/2$                                                          & $\pi/2$                                                      & $\pi/2$                                                        & $\pi/2$                                                                 \rule{0pt}{0.3cm} \\ [0.3ex] \hline
$V$                                             & $\displaystyle\frac{2\pi^{\frac{n+1}{2}}}{\Gamma(\frac{n+1}{2})}$ & $\displaystyle\frac{\pi^{\frac{n+1}{2}}}{\Gamma(\frac{n+1}{2})}$ & $\displaystyle\frac{\pi^n}{n!}$                              & $\displaystyle\frac{\pi^{2n}}{(2n+1)!}$                        & $\displaystyle\frac{\pi^8}{1320\,\Gamma(8)}$                                     \rule{0pt}{0.7cm} \\ [2.5ex] \hline 
$V(a)/V$                                        & $I_{\sin^2\frac{a}{2}}\left(\frac{n}{2},\frac{n}{2}\right)$       & $2 I_{\sin^2\frac{a}{2}}\left(\frac{n}{2},\frac{n}{2}\right)$    & $\sin^{2n}a$                                                 & $\left(1+2n\cos^{2}a\right)\sin^{4n}a$                         & $P(a)\sin^{16}a $                   \rule{0pt}{0.4cm} \\ [1ex] \hline
$\mathrm{exp}_{(0,\ldots,0,1)}(v)$              & $\begin{pmatrix}v\frac{\sin\|v\|}{\|v\|}\\ \cos\|v\|\end{pmatrix}$               & $\begin{pmatrix}\frac{v}{\|v\|}\tan\|v\|\\1\end{pmatrix}$                      & $\begin{pmatrix}\frac{v}{\|v\|}\tan\|v\|\\1\end{pmatrix}$                  & $\begin{pmatrix}\frac{v}{\|v\|}\tan\|v\|\\1\end{pmatrix}$                    & $\begin{pmatrix}\frac{v}{\|v\|}\tan\|v\|\\1\end{pmatrix}$                              \rule{0pt}{0.7cm} \\ [2.5ex] \hline
$r^{d-1}\Omega(r)$ & $\sin^{n-1}r$                 & $\sin^{n-1}r$                & ${{\sin^{2n-1}r\cos r}}$ & ${{\sin^{4n-1}r\cos^3 r}}$ & {${{\sin^{15}r\cos^7 r}}$}  \rule{0pt}{0.4cm} \\ [1ex] \hline
$B_{\mathcal M}$&
$\frac{\sqrt{\pi}\Gamma\left(\frac{n}{2}\right)}{(n-2)\Gamma\left(\frac{n+1}{2}\right)}$&$\frac{\sqrt{\pi}\Gamma\left(\frac{n}{2}-1\right)}{4\Gamma\left(\frac{n+1}{2}\right)}$&$\displaystyle{\frac{1}{4n(n-1)}}$&$\displaystyle{\frac{1}{8n(4n^2-1)}}$&$\displaystyle{\frac{1}{36960}}$  \rule{0pt}{0.6cm} \\ [2ex]
\hline
\end{tabular}
\bigskip
\caption{The real dimension, diameter, volume, normalized volume of a ball, exponential map, volume density in the compact harmonic manifolds and the constant (without the $1/V$ factor) in the dominant term in the Green function asymptotic for $d_R(p,q)<<1$. We are using the notation $P(a)=(165-440\sin^2a+396\sin^4a-120\sin^6a)$ in the column corresponding to $\mathbb{OP}^2$. Also, $I_s(a,b)=B_s(a,b)/B(a,b)$ denotes the normalized incomplete beta function. The constant $B_{\mathcal M}$ is valid for $d>2$.}\label{table:dov}
\end{center}
\end{table}

\end{landscape}



\begin{thebibliography}{99}

\bibitem{Matzke22}
	Anderson, A., Dostert, M., Grabner, P. J., Matzke, R. W. and Stepaniuk, T. A. \textit{Riesz and
Green energy on projective spaces}. ArXiv:2204.04015 [math.CA].


\bibitem{BeltranCorralCriado2019}
		Beltrán, C., Corral, N. and G. Criado del Rey, J.
\textit{Discrete and continuous green energy on compact manifolds},
J. Approx. Theory (2019), vol. 237, 160--185.

\bibitem{BeltranEtayo18}
Beltr\'an, C. and Etayo, U. \textit{The projective ensemble and distribution of points in odd-dimensional spheres}, Constr. Approx. 48 (2018), no. 1, 163--182.


\bibitem{Pedro}
        Beltr\'an, C., Etayo, U. and L\'opez--G\'omez, P. R.
        \textit{Low energy points on the sphere and the real projective plane}. ArXiv:2206.08086 [math.CA]

 \bibitem{BMOC}
 Beltrán, C., Marzo, J. and Ortega-Cerdà, J. \textit{Energy and discrepancy of rotationally invariant determinantal point processes in high dimensional spheres}. J. Complex. 37 (2016), vol. 37, 76--109.
 
 
		
\bibitem{BeltranLizarte22}
	Beltr\'an, C. and Lizarte, F. \textit{A lower bound for the logarithmic energy on $\mathbb{S}^2$ and for the Green energy on $\mathbb{S}^n$}. ArXiv: 2205.02755 [math.CA].



\bibitem{BS18}		
		B\'etermin, L. and Sandier, E.
		\textit{Renormalized Energy and Asymptotic Expansion of Optimal Logarithmic Energy on the Sphere}, 	Constr. Approx. 47 (2018), no. 1, 39--74.		


\bibitem{BHS19}
		Borodachov, S. V., Hardin, D. P. and Saff, E. B.  \textit{Discrete energy on rectifiable sets}, Springer, New York, 2019.
		
\bibitem{Brauchart08}
		Brauchart, J. S. 
		\textit{Optimal logarithmic energy points on the unit sphere}, Math. Comp. 77 (2008), no. 263, 1599--1613. 		


\bibitem{BHS12}		
		Brauchart, J. S., Hardin, D. P. and Saff, E. B. 
		\textit{The next-order term for optimal Riesz and logarithmic energy asymptotics on the sphere}, 	Recent advances in orthogonal polynomials, special functions, and their applications, 2012, pp.	31--61.	





\bibitem{Dubickas96}		
		Dubickas, A.
		\textit{On the maximal product of distances between points on a sphere}, Liet. Mat. Rink. 36 (1996), no. 3, 303--312.	

		
\bibitem{JuanCriadoRey}
G. Criado del Rey, J. \textit{Métricas de condicionamiento y puntos bien distribuidos en variedades}. Doctoral thesis. Universidad de Cantabria (2018).		



\bibitem{Lauritsen21}
		Lauritsen, A. B. \textit{Floating wigner crystal and periodic Jellium configurations}, J. Math. Phys., 62, 083305 (2021).
		
\bibitem{LN75}
		 Lieb, E. H. and Narnhofer, H. \textit{The thermodynamic limit for jellium}, Journal of statistical physics
12 (1975), 291--310.

\bibitem{RSZ94}
		Rakhmanov, E. A., Saff, E. B. and Zhou, Y. M. 
		\textit{Minimal discrete energy on the sphere}, Math. Res. Lett. 1 (1994), no. 6, 647--662.




\bibitem{SM76}
		Sari, R. R. and Merlini, D. \textit{On the $\nu$-dimensional one-component classical plasma: the thermodynamic limit problem revisited}, Journal of statistical physics 14 (1976), 91--100.

\bibitem{Smale2000}
		Smale, S.
		\textit{Mathematical problems for the next century, Mathematics: frontiers and perspectives}, 2000, pp. 271--294.
		
\bibitem{Stefan}		
		Steinerberger, S.
		\textit{On the Logarithmic Energy of Points on $S^2$}, J. d'Analyse Math. (2022). 

\bibitem{Stefan2}
Steinerberger, S.
\textit{A Wasserstein inequality and minimal Green energy on compact manifolds},  J. Funct. Anal. 281 (2021), no. 5.


\bibitem{Wagner89}		
		Wagner, G.
		\textit{On the product of distances to a point set on a sphere}, J. Austral. Math. Soc. Ser.
		A 47 (1989), no. 3, 466--482.

\end{thebibliography}
\end{document}